\newtheorem{The}{Theorem}[section]
\newtheorem{Def}{Definition}[section]
\newtheorem{cor}{Corollary}
\numberwithin{equation}{section}
\begin{document}
\begin{center}
{\LARGE {\bf A new class of special functions arising from the solution of differential equations involving multiple proportional delays  }}
\vskip 0.5cm
{\Large Jayvant Patade$^a$\footnote{Corresponding author}, Sachin Bhalekar$^b$}\\
\textit{$^a$Ashokrao Mane group of InstitutionKolhapur - 416112, India.\\$^b$Department of Mathematics, Shivaji University, Kolhapur - 416004, India.\\ Email: $^a$dr.jayvantpatade@gmail.com,  $^b$sbb\_maths@unishivaji.ac.in, }\\
\end{center}

\begin{abstract}
  Proportional delay is a particular case of time dependent delay. In this article, we consider differential equations involving multiple delays. The series solution of this equation leads to a class of special functions. This class of special functions is independent from all the existing special functions obtained as a solution of differential equations. We analyze the basic properties of this class and discuss various identities and relations.
  
\end{abstract}
Keywords: Special functions, Successive approximation, proportional delay.

\section{Introduction}

Special functions are plying a vital role in the solutions of differential equations. Exponential, sine, cosine, hypergeometric and Mittag-Leffler are important classes of special functions arising as solutions of various classical and fractional differential equations. Though there is a huge literature devoted to the special functions arising from ordinary differential equations (ODEs), there is a lack of corresponding literature in delay differential equations (DDE).
In \cite{bp,pb1} we discussed the solutions of a class of DDE viz. proportional delay differential equations and provided the solutions in terms of new special functions. In this article, we generalize the DDEs considered in \cite{pb1} to involve multiple delays. Since the differential equations without delay are inequivalent to the differential equations involving delay, the special functions proposed in this article are independent from the existing special functions.

\section{Preliminaries}\label{Pre}
In this section, we discuss some basic definitions and results \cite{Magnus,Kilbas}.

\begin{Def}
The upper and lower incomplete gamma functions are defined as
\begin{equation}
\Gamma(n,x) = \int_{x}^\infty  t^{n-1} e^{-t}dt\quad \textrm{and}
\end{equation}
\begin{equation}
\gamma(n,x) = \int_{0}^x  t^{n-1} e^{-t}dt \quad \textrm{respectively}.
\end{equation}
\end{Def}

\begin{Def}
Kummer's confluent hyper-geometric functions $_{1}F_{1}(a;c;x)$ and  $U(a;c;x)$ are defined as below
\begin{equation}
_{1}F_{1}(a;c;x) = \sum_{n=0}^\infty \frac{(a)_n}{(c)_n} \frac{x^n}{n!},\quad c\ne 0,-1,-2,\cdots  \textrm{and} 
\end{equation}
\begin{equation}
U(a;c;x) = \frac{\pi}{\sin(\pi c)}\left(\frac{_{1}F_{1}(a;c;x)}{\Gamma(c)\Gamma(1+a-c)}-x^{1-c}\frac{_{1}F_{1}(1+a-c;2-c;x)}{\Gamma(a)\Gamma(2-c)}\right),
\end{equation}
\begin{equation}
-\pi<arg (x) \leq \pi.\nonumber
\end{equation}
\end{Def}

\begin{Def}\label{6.2.1}
The generalized Laguerre polynomials are defined as
\begin{eqnarray}
L_{n}^{(\alpha)}(x) &=& \sum_{m=0}^n (-1)^m \dbinom{n+\alpha}{n-m}\frac{x^m}{m!}\\
&=& \dbinom{n+\alpha}{n} {}_{1}F_{1}(-n;\alpha+1;x).
\end{eqnarray}
\end{Def}

\begin{Def}
	A real function $f(x)$, $x>0$, is said to be in space $C_\alpha$, $\alpha\in\mathbb{R}$, if there exists a real number $p (>\alpha)$, such that $f(x)=x^p f_1(x) $ where $f_1(x)\in C[0,\infty)$.
\end{Def}

\begin{Def}
	A real function $f(x)$, $x > 0$, is said to be in space $C^m_\alpha$, $m\in\mathbb{N}\cup \{0\}$, if $ f^{(m)} \in C_\alpha$.
\end{Def}
\begin{Def}
	Let $f\in C_\alpha $ and $\alpha \geq -1$, then the (left-sided) Riemann-Liouville integral of order $\mu, \mu> 0 $ is given by
	\begin{equation}
	I^\mu f(t)=\frac{1}{\Gamma(\mu)} \int_{0}^t (t-\tau)^{\mu-1}f(\tau)d\tau,\quad t>0.
	\end{equation}
\end{Def}

\begin{Def}
	The (left sided) Caputo fractional derivative of $f, f \in C_{-1}^m, m\in\mathbb{N}\cup\{0\}$, is defined as:
	\begin{eqnarray}
	D^\mu f(t)&=&\frac{d^m}{ dt^m} f(t),\quad \mu = m \nonumber\\
	&=& I^{m-\mu}\frac{d^m}{ dt^m} f(t),\quad {m-1} <\mu <m,\quad m\in \mathbb{N}.
	\end{eqnarray}
\end{Def}
Note that for $0\le m-1 < \alpha \le m$ and $\beta>-1$
\begin{eqnarray}
I^\alpha (x-b)^\beta &=&\frac{\Gamma{(\beta+1)}}{ \Gamma{(\beta+\alpha+1)}} (x-b)^{\beta+\alpha},\nonumber\\
\left(I^\alpha D^\alpha f\right)(t)&=& f(t)-\sum_{k=0}^{m-1} f^{(k)}(0)\frac{t^k}{k!}.
\end{eqnarray}

\begin{Def}
	Mittag-Leffler function of order $\alpha>0$ is defined by the series 
	\begin{equation}
	E_\alpha (x)=\sum_{n=0}^\infty\frac{x^n}{\Gamma{(\alpha n+1)}}
	\end{equation}
\end{Def}

\subsection{Daftardar-Gejji and Jafari Method}
Daftardar-Gejji and Jafari Method (DJM) \cite{GEJJI1} is one of the popular methods applied to solve nonlinear equations  of form 
\begin{equation}
u= f + L(u) + N(u),\label{1.1.1}
\end{equation}
where $L$ and $N$ are linear and nonlinear operators respectively and $f$ is known function.\\
In this case, the DJM provides the solution in the form of series 
\begin{equation}
u= \sum_{i=0}^\infty u_i=f +  \sum_{i=0}^\infty L(u_i) + \sum_{i=0}^\infty G_i\label{1.1.2}
\end{equation}
where $G_0=N(u_0)$ and $G_i = \left\{N\left(\sum_{j=0}^i u_j\right)- N\left(\sum_{j=0}^{i-1} u_j\right)\right\}$, $i\geq 1$.\\
From Eq.(\ref{1.1.2}), the DJM series terms are generated as bellow:
\begin{equation}
u_0 =f,\quad u_{m+1}= L(u_m) + G_m,\quad m=0,1, 2, \cdots.\label{1.1.6}
\end{equation}

\section{Existence, uniqueness and convergence:Nonlinear Case}\label{exist}
 First, we consider the nonlinear equation
\begin{equation}
y'(x) = f\left(x, y(q_0x), y(q_1x),\cdots,y(q_nx)\right),  q_0=1\, \textrm{and} \, 0<q_i<1,i=1,2,\cdots,n. \label{1}
\end{equation}
The Eq. (\ref{1}) is a particular case of time dependent delay differential equation (DDE)
\begin{equation}
y'(x) = f\left(x, y\left(x-\tau_0(x)\right), y\left(x-\tau_1(x)\right),y\left(x-\tau_2(x)\right)\cdots y\left(x-\tau_n(x) \right)\right)\nonumber
\end{equation}
with\quad$\tau_i(x)=(1-q_i)x,\quad i=1,2,3,\cdots,n$. The DJM series solution of Eq. (\ref{1}) is of the form
 \begin{equation}
 y= \sum_{i=0}^\infty y_i. \label{2}
 \end{equation}

We present convergence result of this series solution motivated from \cite{pb3}.
\begin{The}
Let f be a continuous function defined on a $(n+2)$ dimensional rectangle\\ $R=\{ (x,y_0,y_1,\cdots,y_n)| 0\leq x\leq b, -\delta_i\leq y_i\leq \delta_i, i=0,1,2,\cdots,n \} $ and $\mid f\mid \leq M$ on $R$. Suppose that f satisfies Lipschitz type condition \\
 \begin{equation}
\mid f\left(x,y_0,y_1,\cdots,y_n\right)- f\left(x,u_0,u_1,\cdots,u_n\right)\mid \leq \sum_{i=0}^nL_i \mid y_i- u_i \mid.\label{3}
\end{equation}
 Then the DJM series solution  (\ref{2}) of DDE  (\ref{1})
converges uniformly  in the interval  [0,b].
\end{The}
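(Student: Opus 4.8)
The plan is to recast (\ref{1}) as an equivalent integral equation, observe that the DJM iterates coincide with Picard-type successive approximations, and then dominate each DJM term $y_m$ by the general term of a convergent exponential series, so that the Weierstrass $M$-test yields uniform convergence on $[0,b]$. Integrating (\ref{1}) over $[0,x]$ gives
\[
y(x) = y(0) + \int_0^x f\!\left(t, y(q_0 t), y(q_1 t), \ldots, y(q_n t)\right) dt,
\]
which is of the form (\ref{1.1.1}) with known function $y(0)$, $L \equiv 0$, and nonlinear operator $N(u)(x) = \int_0^x f(t, u(q_0 t), \ldots, u(q_n t))\,dt$. By (\ref{1.1.6}) we have $y_0(x) \equiv y(0)$ and $y_{m+1} = G_m$, and a short telescoping argument shows that the partial sums $S_m = \sum_{j=0}^m y_j$ satisfy $S_m = y(0) + N(S_{m-1})$; in particular $S_m(q_i t) - S_{m-1}(q_i t) = y_m(q_i t)$.

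The core step is the estimate, proved by induction on $m$, that for $m \ge 1$ and $x \in [0,b]$,
\[
|y_m(x)| \le \frac{M\, L^{m-1}}{m!}\, x^m, \qquad L := \sum_{i=0}^n L_i .
\]
For $m=1$, $y_1 = G_0 = N(y_0)$ gives $|y_1(x)| \le \int_0^x |f(t, y(0), \ldots, y(0))|\,dt \le Mx$. For the inductive step, $y_{m+1} = G_m = N(S_m) - N(S_{m-1})$, so applying the Lipschitz bound (\ref{3}) under the integral sign,
\[
|y_{m+1}(x)| \le \int_0^x \sum_{i=0}^n L_i\, |y_m(q_i t)|\, dt \le \int_0^x \sum_{i=0}^n L_i\, \frac{M L^{m-1}}{m!}\,(q_i t)^m\, dt,
\]
which is at most $\frac{M L^m}{(m+1)!}\, x^{m+1}$ because $0 < q_i \le 1$, closing the induction. (If $L=0$ the right-hand side of (\ref{1}) does not depend on $y$ and the claim is immediate.)

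Summing the estimate yields $\sum_{m=0}^\infty |y_m(x)| \le |y(0)| + \tfrac{M}{L}\bigl(e^{Lb}-1\bigr)$ for every $x \in [0,b]$. Since the majorant is a convergent numerical series independent of $x$, the Weierstrass $M$-test shows that the series (\ref{2}) converges absolutely and uniformly on $[0,b]$; being a uniform limit of continuous functions (each $y_m$ is an iterated integral of a continuous function), the sum is continuous on $[0,b]$.

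I expect the only genuine subtlety to be the familiar Picard–Lindel\"of point: to invoke (\ref{3}) one must know that every delayed argument $S_m(q_i t)$ stays in $[-\delta_i,\delta_i]$ so that $f$ is actually defined along the iterates. This is handled by shrinking $b$ if necessary so that $|y(0)| + \tfrac{M}{L}\bigl(e^{Lb}-1\bigr) \le \min_{0 \le i \le n}\delta_i$ (using $q_i t \le x \le b$), or by reading the hypothesis as asserting that the iterates remain in $R$. Everything else is routine estimation.
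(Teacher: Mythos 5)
Your proposal is correct and follows essentially the same route as the paper: reduce to the integral equation, bound the DJM terms inductively by $\frac{M(\sum_i L_i)^{m-1}}{m!}x^m$ using the Lipschitz condition and $0<q_i\le 1$, and conclude by the Weierstrass $M$-test. Your closing remark about ensuring the iterates remain in $R$ (so that $f$ and the Lipschitz bound actually apply) is a genuine point of care that the paper's proof passes over silently, but it does not change the argument.
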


\begin{proof}
Without loss of gerenality, assume that $y(0) =1$.	
The equivalent integral equation of  (\ref{1}) is
\begin{equation}
y(x)=1 + \int_{0}^{x} f\left(x, y(q_0t), y(q_1t),y(q_2t),\cdots,y(q_nt)\right)dt.\nonumber
\end{equation}
Using DJM, we get
\begin{eqnarray*}
y_0(x) &=& 1,\\
y_1(x) &=& \int_{0}^{x} f\left(t, y_0(q_0t), y_0(q_1t),y_0(q_2t),\cdots,y_0(q_nt)\right)dt.\\
\Rightarrow\mid y_1(x)\mid &\leq& Mx.
\end{eqnarray*}
$\textrm{Since}\; q_0=1, 0<q_i<1,i=1,2,3,\cdots,n., \; q_ib\leq b.$
\begin{eqnarray*}
\Rightarrow \mid y_1(q_ix)\mid &\leq& M q_i x, \quad  \forall x\in [0,b].
\end{eqnarray*}
\begin{eqnarray*}
y_2(x) &=& \int_{0}^{x} \left(f\left(t, y_1(q_0t)+y_0(q_0t), y_1(q_1t)+y_0(q_1t),y_1(q_2t)+y_0(q_2t),\cdots,y_1(q_nt)+y_0(q_nt)\right)\right.\\
&& \left. - f\left(t, y_0(q_0t), y_0(q_1t),y_0(q_2t),\cdots,y_0(q_nt)\right)\right)dt.\\
\Rightarrow\mid y_2(x)\mid &\leq & \int_{0}^{x} \left(\sum_{i=0}^nL_i \mid y_1(q_it)\mid\right)dt\\
&\leq& M\left(\sum_{i=0}^nL_iq_i\right)\frac{x^2}{2!}\\
&\leq& M\left(\sum_{i=0}^nL_i\right)\frac{x^2}{2!}.\\
\Rightarrow \mid y_2(q_ix)\mid &\leq& Mq_i^2\left(\sum_{i=0}^nL_iq_i\right)\frac{x^2}{2!},\quad x\in [0,b]\\
&\leq &   M\left(\sum_{i=0}^nL_i\right)\frac{x^2}{2!}.
\end{eqnarray*}

Using induction, we get
\begin{eqnarray*}
\mid y_m(x)\mid &\leq& M \prod_{j=1}^{m-1}\left(\sum_{i=0}^nL_iq^j_i\right)\frac{x^m}{m!}\\
 &\leq&   M\left(\sum_{i=0}^nL_i\right)^{m-1}\frac{x^m}{m!}, \quad m=1,2,3\cdots.
\end{eqnarray*}
Taking summation over $m$, we get

\begin{eqnarray*}
\left\vert \sum_{m=0}^\infty y_m \right\vert & \leq & \frac{M}{\sum_{i=0}^nL_i} \left(e^{\sum_{i=0}^nL_ix}-1\right)+1\\
& \leq & \frac{M e^{\sum_{i=0}^nL_ib}}{\sum_{i=0}^nL_i} +1 , \quad x\in [0,b].
\end{eqnarray*}
Thus, by \cite{Rudin}, we can conclude that the series solution of (\ref{1}) converges uniformly  in the interval  [0,b]. Hence existence of (\ref{1}) is proved. The uniqueness is of solution is obvious by condition (\ref{3}).     
\end{proof}

\section{Stability analysis}\label{stab}
The following definitions and theorems are generalization of corresponding definition and theeorems given in \cite{Deng}.

\begin{Def}
Consider the autonomus time-dependent delay differential equation (DDE),
\begin{equation}
y'(x) = g(y\left(x\right), y\left(x-\tau_1(x)\right),y\left(x-\tau_2(x)\right)\cdots y\left(x-\tau_n(x) \right)),\label{6.2.3}
\end{equation}
where $g:\mathbb{R}^{n+1} \rightarrow \mathbb{R}$.
The flow $\phi_x(x_0)$ is the solution $y(x)$ of ({\ref{6.2.3}}) with initial condition\\ $y(x) = x_0,\, x\leq 0$. The point $y^*$ is called equilibrium solution of  ({\ref{6.2.3}}) if $g(y^*,y^*,\cdots, y^* )=0$.\\
\textbf{(a)} If, for any  $\epsilon > 0$, there exist  $\delta > 0$ such that $ |x_0-y^*|< \delta \Rightarrow |\phi_x(x_0)-y^*| < \epsilon,$ then the system ({\ref{6.2.3}}) is stable (in the Lyapunov sense) at the equilibrium  $y^*$.\\
\textbf{(b)} If the system ({\ref{6.2.3}}) is stable at   $y^*$ and moreover,$\lim\limits_{x\rightarrow\infty}|\phi_x(x_0)-y^*|=0$ then the system  ({\ref{6.2.3}}) is said to be asymptotically stable at $y^*$.\\ 
\textbf{(c)} If the system ({\ref{6.2.3}}) is not stable then it is called unstable.
\end{Def}

\begin{The}
Assume that the  equilibrium solution $y^*$ of the equation
\begin{equation}
y' = g(y(x),y(x-\tau_1^*),y(x-\tau_2^*),\cdots,y(x-\tau_n^*) ),\quad \tau_1^*= \tau_1(x_0), \tau_2^*= \tau_2(x_0),\cdots,\tau_n^*= \tau_n(x_0) \nonumber 
\end{equation}
is stable at equilibrium  $y^*$ and
\begin{equation}
\|g(y(x),y(x-\tau_1(x)),\cdots,y(x-\tau_n(x))) - g(y(x),y(x-\tau_1(x_1)),\cdots,y(x-\tau_n(x_n)))\|<\sum_{i=0}^n\epsilon_i |x-x_i|,\nonumber
\end{equation}
for some $\epsilon_i>0$ and $x,x_i\in [x_0, x_0+c)$$(i=1,2,\cdots,n)$,
c is a positive constant, then there exists $\bar{x}> 0$ such that
the equilibrium solution $y^*$ of Eq. ({\ref{6.2.3}}) is stable at equilibrium  $y^*$ on finite time interval $[x_0,\bar{x})$.
\end{The}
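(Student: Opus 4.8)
The plan is to run a ``freezing the delay'' perturbation argument: on a sufficiently short interval $[x_0,\bar x)$ the variable delays $\tau_i(x)$ remain close to the constant delays $\tau_i^\ast=\tau_i(x_0)$, so the flow of (\ref{6.2.3}) stays uniformly close to the flow of the constant-delay system, whose stability at $y^\ast$ is exactly what we are assuming; finite-time stability of (\ref{6.2.3}) then follows by the triangle inequality.

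First I would fix $\epsilon>0$. By the assumed stability of the constant-delay equation at $y^\ast$, there is $\delta>0$ such that $|x_0-y^\ast|<\delta$ forces its solution $\bar y(x)$ (carrying the same constant history as the solution $y(x)$ of (\ref{6.2.3})) to satisfy $|\bar y(x)-y^\ast|<\epsilon/2$ for all $x\ge x_0$. Writing $y$ and $\bar y$ as Volterra integral equations on $[x_0,\cdot)$ and subtracting gives
\[
y(x)-\bar y(x)=\int_{x_0}^{x}\bigl[\,g\bigl(y(s),y(s-\tau_1(s)),\cdots\bigr)-g\bigl(\bar y(s),\bar y(s-\tau_1^\ast),\cdots\bigr)\,\bigr]\,ds .
\]
The key manipulation is to insert the hybrid term $g\bigl(y(s),y(s-\tau_1^\ast),\cdots,y(s-\tau_n^\ast)\bigr)$ and split the integrand. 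The first piece $g(y(s),y(s-\tau_1(s)),\cdots)-g(y(s),y(s-\tau_1^\ast),\cdots)$ is precisely of the form estimated by the hypothesis of the theorem with $x=s$ and $x_1=\cdots=x_n=x_0$ (legitimate because $\tau_i(x_0)=\tau_i^\ast$ and $s\in[x_0,x_0+c)$), hence is $\le\bigl(\sum_{i}\epsilon_i\bigr)(s-x_0)$. The second piece $g(y(s),y(s-\tau_1^\ast),\cdots)-g(\bar y(s),\bar y(s-\tau_1^\ast),\cdots)$ is controlled by the Lipschitz condition (\ref{3}): it is $\le\sum_i L_i\,|y(s-\tau_i^\ast)-\bar y(s-\tau_i^\ast)|$, and each summand either vanishes (when $s-\tau_i^\ast\le x_0$, since then both arguments lie on the common history) or is bounded by $L\sup_{x_0\le u\le s}|y(u)-\bar y(u)|$ with $L=\sum_i L_i$.

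Next I would put $v(x)=\sup_{x_0\le u\le x}|y(u)-\bar y(u)|$; the two estimates combine to
\[
v(x)\le\Bigl(\sum_{i}\epsilon_i\Bigr)\frac{(x-x_0)^2}{2}+L\int_{x_0}^{x}v(s)\,ds ,
\]
and Gronwall's inequality yields $v(x)\le\tfrac12\bigl(\sum_{i}\epsilon_i\bigr)(x-x_0)^2e^{L(x-x_0)}$. This upper bound is continuous in $x$, does not depend on the initial datum, and tends to $0$ as $x\downarrow x_0$, so one can pick $\bar x\in(x_0,x_0+c]$ with $v(x)<\epsilon/2$ on $[x_0,\bar x)$. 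Then, for every initial value with $|x_0-y^\ast|<\delta$ and every $x\in[x_0,\bar x)$,
\[
|y(x)-y^\ast|\le|y(x)-\bar y(x)|+|\bar y(x)-y^\ast|<\frac{\epsilon}{2}+\frac{\epsilon}{2}=\epsilon ,
\]
which is precisely stability of $y^\ast$ for (\ref{6.2.3}) on the finite interval $[x_0,\bar x)$.

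The hard part will be the splitting step together with the bookkeeping of the delayed arguments: one must check that the first piece is genuinely covered by the theorem's delay-perturbation hypothesis and, above all, that the history portions (those $s$ for which $s-\tau_i(s)$ or $s-\tau_i^\ast$ falls below $x_0$) contribute nothing, so that the supremum appearing in the Gronwall estimate may be taken over $[x_0,x]$ alone. A point worth stating explicitly is that the horizon $\bar x$ produced here depends on $\epsilon$ (and on $\sum_i\epsilon_i$, $\sum_i L_i$ and $c$); this dependence of the time horizon on $\epsilon$ is exactly the content of ``stability on a finite time interval'' in this setting.
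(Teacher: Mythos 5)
The paper itself offers no proof of this theorem: it is stated only as a ``generalization of corresponding theorems'' in \cite{Deng} and left unproved, so there is no argument of the authors' own to compare yours against. Your freezing-the-delay strategy --- comparing the flow of (\ref{6.2.3}) with that of the constant-delay system via the integral equation, inserting the hybrid term $g\bigl(y(s),y(s-\tau_1^*),\cdots,y(s-\tau_n^*)\bigr)$, and closing with Gronwall --- is the natural route and is essentially the argument of the cited reference, so the overall plan is sound.

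Two genuine gaps remain. First, the Lipschitz condition (\ref{3}) you invoke to control the second piece of your splitting is not a hypothesis of this theorem: it is an assumption on the function $f$ of the existence theorem in Section \ref{exist}, a different object from the $g$ here. As stated, the theorem assumes only stability of the frozen system and the delay-perturbation bound, and without some Lipschitz or modulus-of-continuity control on $g$ (in the undelayed argument as well as the delayed ones) the comparison of $y$ with $\bar y$ cannot be closed; you must either add that hypothesis explicitly or observe that the statement is not provable without it. Second, your horizon $\bar{x}$ depends on $\epsilon$: the Gronwall bound $\tfrac12\bigl(\sum_i\epsilon_i\bigr)(x-x_0)^2e^{L(x-x_0)}$ is independent of the initial datum, so for a fixed $\bar{x}$ it cannot be driven below an arbitrary $\epsilon$ by shrinking $\delta$. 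Under the paper's own definition of stability (part (a) of the definition in Section \ref{stab}), stability on $[x_0,\bar{x})$ requires one $\bar{x}$ that works for every $\epsilon$; what your construction delivers is the weaker statement that for each $\epsilon$ there is an interval $[x_0,\bar{x}(\epsilon))$ on which solutions stay $\epsilon$-close. You flag this in your closing sentence but present it as the intended meaning of ``finite-time stability'' rather than as a mismatch with the claim; it should be stated as a limitation (one shared, to be fair, by the theorem's source).
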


\begin{cor}
If the real parts of all roots of $ \lambda -\sum_{i=0}^na_ie^{-\lambda\tau_i^*} $ are negative, where $ a_i =\partial_if$, $i=1,2,\cdots,n$ evaluated at equilibrium $y^*$, then  Eq. ({\ref{6.2.3}}) is stable at $y^*$ on finite time
interval  $[x_0,\bar{x})$.
\end{cor}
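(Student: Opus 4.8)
The plan is to obtain this corollary as a consequence of the preceding theorem, using the classical principle of linearized stability for autonomous constant-delay equations. First I would freeze the delays at $x_0$ and work with the constant-delay equation
\[
y'(x)=g\bigl(y(x),y(x-\tau_1^*),\dots,y(x-\tau_n^*)\bigr),\qquad \tau_i^*=\tau_i(x_0),
\]
writing $\tau_0^*=\tau_0(x_0)=0$ (since $q_0=1$ forces $\tau_0\equiv 0$) so that the non-delayed argument is included uniformly in the sum. Putting $z(x)=y(x)-y^*$ and Taylor-expanding $g$ about $(y^*,\dots,y^*)$ gives the variational equation $z'(x)=\sum_{i=0}^n a_i\, z(x-\tau_i^*)+R(x,z)$, where $a_i=\partial_i g(y^*,\dots,y^*)$ (the quantities called $a_i=\partial_i f$ in the statement) and $R$ collects the higher-order terms, which are $o(\|z\|)$ uniformly near the equilibrium. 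Substituting $z(x)=e^{\lambda x}$ into the linear part reproduces exactly the characteristic function $\Delta(\lambda)=\lambda-\sum_{i=0}^n a_i e^{-\lambda\tau_i^*}$ appearing in the hypothesis.

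Next I would invoke the standard spectral criterion for linear autonomous retarded equations (the same one underlying \cite{Deng}): because $\Delta$ is an exponential polynomial, it has only finitely many zeros in any right half-plane $\{\lambda:\operatorname{Re}\lambda\ge -\eta\}$. Hence the assumption that every root of $\Delta$ has negative real part upgrades to a uniform spectral bound $\sup\{\operatorname{Re}\lambda:\Delta(\lambda)=0\}=-\beta<0$, which yields exponential decay of every solution of the linear variational equation. Absorbing the remainder $R(x,z)$ by the variation-of-constants formula then shows that the equilibrium $y^*$ of the constant-delay equation is asymptotically stable, and in particular Lyapunov stable. This is precisely the first hypothesis demanded by the preceding theorem.

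Finally, under the Lipschitz-type bound on $g$ with respect to the delay shifts assumed there, namely $\|g(y(x),\dots,y(x-\tau_i(x)),\dots)-g(y(x),\dots,y(x-\tau_i(x_i)),\dots)\|<\sum_{i=0}^n\epsilon_i|x-x_i|$ on $[x_0,x_0+c)$, the preceding theorem applies verbatim and produces $\bar x>0$ for which the equilibrium $y^*$ of the time-dependent-delay equation (\ref{6.2.3}) is stable on $[x_0,\bar x)$; this is the desired conclusion.

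I expect the main obstacle to be making the linearized-stability reduction rigorous in the degenerate situation $\tau_0^*=0$, where (\ref{6.2.3}) is a mixed ODE/DDE rather than a pure retarded equation: one must check that the solution semigroup of the linear part is well posed in this case and that the root count of $\Delta$ in right half-planes is still finite (so that the spectral gap $\beta$ exists). Once that is secured, the standard variation-of-constants estimate handles $R$ and the corollary follows by applying the preceding theorem.
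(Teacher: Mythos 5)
The paper states this corollary without proof, presenting it as a direct generalization of the corresponding result in Deng et al.\ \cite{Deng}, and your linearized-stability argument --- freeze the delays at $x_0$, linearize about $y^*$ to obtain the characteristic function $\lambda-\sum_{i=0}^n a_ie^{-\lambda\tau_i^*}$ (with $\tau_0^*=0$ for the undelayed argument), use the spectral gap of the retarded linear equation to conclude asymptotic (hence Lyapunov) stability of the frozen-delay system, and then invoke the preceding theorem together with its Lipschitz-in-the-delays hypothesis --- is exactly the intended route. Your closing worry about the case $\tau_0^*=0$ is a non-issue: an equation mixing an instantaneous argument with retarded ones is a standard retarded functional differential equation, for which the finite root count in right half-planes and the variation-of-constants estimate hold verbatim.
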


\section{Linear equation: Exact solution}\label{panto}

Consider the differential equation involving multiple proportional delays,

\begin{equation}
y'(x) = \sum_{i=0}^n a_i y(q_ix),\quad y(0)= 1,  \label{6.3.1}
\end{equation}
where $ q_0=1$, $0<q_i<1$ and $a_0,a_i \in \mathbb{R}$, $i=1,2,3,\cdots,n$.
The equation (\ref{6.3.1}) has applications in Science and Engineering \cite{pb2,Ockendon}. \\
Integrating (\ref{6.3.1}), we get
\begin{equation}
y(x)= 1 + \int_{0}^{x}\left(\sum_{i=0}^n a_i y(q_it)\right)dt\label{6.3.2}\nonumber
\end{equation}
Using successive approximation, we obtain
\begin{eqnarray}
y_0(x) &=& 1 ,\nonumber\\
y_1(x)&=& \int_{0}^{x}\left(\sum_{i=0}^n a_i y_0(q_it)\right)dt\nonumber\\
&=& \sum_{i=0}^n a_i\frac{x}{1!},\nonumber\\
y_2(x)&=& \int_{0}^{x}\left(\sum_{i=0}^n a_i y_1(q_it)\right)dt\nonumber\\
&=&\int_{0}^{x}\sum_{i=0}^n a_i\left(\sum_{i=0}^n a_iq_i t\right)dt\nonumber\\
&=& \left(\sum_{i=0}^n a_i\right)\left(\sum_{i=0}^n a_iq_i\right)\frac{x^2}{2!},\nonumber\\
y_3(x)&=&  \int_{0}^{x}\left(\sum_{i=0}^n a_i y_2(q_it)\right)dt\nonumber\\
&=& \left(\sum_{i=0}^n a_i\right)\left(\sum_{i=0}^n a_iq_i\right)\left(\sum_{i=0}^n a_iq^2_i\right)\frac{x^3}{3!},\nonumber\\
&\vdots& \nonumber
\end{eqnarray}

\begin{eqnarray}
y_m(x)&=& \frac{x^m}{m!}\prod_{j=0}^{m-1}\left(\sum_{i=0}^n a_iq^j_i\right), \quad m=1,2,3\cdots. \nonumber
\end{eqnarray}
$\therefore$ The exact solution of  (\ref{6.3.1}) is 
\begin{eqnarray}
y(x) &=& y_0(x)+y_1(x)+y_2(x)+\cdots\nonumber\\
&=&  1 + \sum_{m=1}^\infty \frac{x^m}{m!}\prod_{j=0}^{m-1}\left(\sum_{i=0}^n a_iq^j_i\right).\label{6.3.3}
\end{eqnarray}
If we define 
\begin{equation}
\prod_{j=0}^{m-1}\left(\sum_{i=0}^n a_iq^j_i\right) =1 \quad \textrm{for}\quad m=0,\nonumber
\end{equation}
then 
\begin{equation}
y(x)=\sum_{m=0}^\infty \frac{x^m}{m!}\prod_{j=0}^{m-1}\left(\sum_{i=0}^n a_iq^j_i\right)
\end{equation}
This solution of (\ref{6.3.1}) provides a novel special function 
\begin{equation}
 \mathcal{R}(\bar{a};\bar{q};x)= \sum_{m=0}^\infty \frac{x^m}{m!}(\bar{a};\bar{q})_m.\label{6.3.4}
\end{equation}
\textbf{Note:} We use a brief notations $\mathcal{R}(\bar{a};\bar{q};x)$ for the special function $\mathcal{R}(a_0,a_1,\cdots,a_n;q_0,q_1,\cdots,q_n,x)$ and $(\bar{a};\bar{q})_m$ for $\prod_{j=0}^{m-1}\left(\sum_{i=0}^n a_iq^j_i\right)$.

\section{Analysis}\label{analy}

\begin{The}
	The power series
\begin{equation}
\mathcal{R}(\bar{a};\bar{q};x)= \sum_{m=0}^\infty \frac{x^m}{m!}(\bar{a};\bar{q})_m,\label{6.5.0}
\end{equation}

 has infinite radius of convergence.
\end{The}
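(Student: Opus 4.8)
The plan is to bound the coefficients of the series by those of an exponential series and then invoke the root (or ratio) test. Write $c_m = \frac{1}{m!}(\bar a;\bar q)_m$ for the $m$-th coefficient, so that $\mathcal{R}(\bar a;\bar q;x)=\sum_{m=0}^\infty c_m x^m$. Set $A=\sum_{i=0}^n|a_i|$. Since $q_0=1$ and $0<q_i<1$ for $i\ge 1$, we have $0<q_i^{\,j}\le 1$ for every $j\ge 0$ and every $i$, hence each factor appearing in $(\bar a;\bar q)_m$ satisfies
\[
\left|\sum_{i=0}^n a_i q_i^{\,j}\right|\le \sum_{i=0}^n|a_i|\,q_i^{\,j}\le \sum_{i=0}^n|a_i| = A .
\]

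Multiplying these estimates over $j=0,1,\dots,m-1$ gives $\bigl|(\bar a;\bar q)_m\bigr|\le A^m$, and therefore
\[
|c_m|=\frac{\bigl|(\bar a;\bar q)_m\bigr|}{m!}\le \frac{A^m}{m!},\qquad m=0,1,2,\dots
\]
From here I would finish in either of two equivalent ways. The quickest is direct comparison: for every $x$,
\[
\sum_{m=0}^\infty |c_m|\,|x|^m \le \sum_{m=0}^\infty \frac{(A|x|)^m}{m!}=e^{A|x|}<\infty,
\]
so the series converges absolutely on all of $\mathbb{R}$ (indeed on all of $\mathbb{C}$), which is exactly the assertion that its radius of convergence is infinite. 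Alternatively, by the Cauchy–Hadamard formula the radius of convergence is $1/\limsup_{m\to\infty}|c_m|^{1/m}$, and $|c_m|^{1/m}\le A/(m!)^{1/m}\to 0$ because $(m!)^{1/m}\to\infty$ (e.g. by Stirling, or elementarily); hence the radius is $+\infty$.

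There is essentially no substantive obstacle here; the computation parallels the convergence estimate already used in Section~\ref{exist}. The only point worth a word of care is that the ratio test is slightly delicate if some factor $\sum_{i=0}^n a_i q_i^{\,j}$ happens to vanish (then $(\bar a;\bar q)_m=0$ for all large $m$ and $\mathcal{R}$ is a polynomial, trivially entire); using the comparison with $e^{A|x|}$ or the root test above avoids having to treat this degenerate case separately, so I would present the argument in that form.
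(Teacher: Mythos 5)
Your proof is correct, but it takes a different route from the paper's. The paper applies the ratio test directly to consecutive coefficients: it computes
\[
\left|\frac{A_{m+1}}{A_m}\right|=\frac{\left|\sum_{i=0}^n a_i q_i^{m}\right|}{m+1},
\]
observes that the numerator stays bounded while the denominator grows, and concludes $1/R=0$. You instead dominate the whole series by $e^{A|x|}$ with $A=\sum_{i=0}^n|a_i|$ (equivalently, apply Cauchy--Hadamard to the bound $|c_m|\le A^m/m!$). The two arguments are of comparable length, but yours is genuinely more robust on exactly the point you flag: since the $a_i$ are arbitrary reals, some factor $\sum_{i=0}^n a_i q_i^{\,j}$ may vanish, in which case $(\bar a;\bar q)_m=0$ for all large $m$ and the paper's ratio $A_{m+1}/A_m$ is undefined (the function degenerates to a polynomial). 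The paper does not address this case, whereas your comparison/root-test formulation covers it automatically. Your bound $|(\bar a;\bar q)_m|\le A^m$ also mirrors the estimate the paper itself uses later to prove $\mathcal{R}(\bar a;\bar q;x)\le e^{(\sum_i a_i)x}$ in the nonnegative case, so nothing in your argument is foreign to the paper's toolkit.
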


\begin{proof}
Suppose
\begin{equation*}
A_m =\frac{1}{m!}(\bar{a};\bar{q})_m,\quad m=0,1,2,\cdots.
\end{equation*}
If  $R$ is radius of convergence of (\ref{6.5.0}) then by using ratio test  \cite{Apostal}
\begin{eqnarray}
\frac{1}{R}=\lim_{m\to\infty}\left| \frac{A_{m+1}}{A_m}\right| &=&\lim_{m\to\infty}\left| \frac{ \frac{1}{{(m+1)!}}\prod_{j=0}^{m}\left(\sum_{i=0}^n a_iq^j_i\right)}{ \frac{1}{m!}\prod_{j=0}^{m-1}\left(\sum_{i=0}^n a_iq^j_i\right)}\right| \nonumber\\
&=& \lim_{m\to\infty} \left| \frac{\left(\sum_{i=0}^n a_iq^m\right)}{(m+1)}\right| \nonumber\\
\Rightarrow R&=& \infty.\nonumber
\end{eqnarray}
Thus the series has infinite radius of convergence.
\end{proof}  

\begin{cor}
The power series (\ref{6.3.4}) is  absolutely convergent for all $x$ and hence it is uniformly convergent on any compact interval on $\mathbb{R}$.
\end{cor}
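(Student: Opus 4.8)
The plan is to deduce both assertions directly from the preceding theorem, which establishes that the radius of convergence $R$ of $\mathcal{R}(\bar a;\bar q;x)$ is infinite. First I would recall the standard fact from the theory of power series (e.g. \cite{Apostal}): if a power series $\sum_{m=0}^\infty A_m x^m$ has radius of convergence $R$, then it converges absolutely at every point $x$ with $|x| < R$. Since here $R = \infty$, this immediately yields absolute convergence of $\sum_{m=0}^\infty \frac{x^m}{m!}(\bar a;\bar q)_m$ for every $x \in \mathbb{R}$.

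Next I would establish uniform convergence on an arbitrary compact subset of $\mathbb{R}$. It suffices to treat a closed interval $[-r,r]$ for arbitrary $r > 0$, since any compact set is contained in such an interval. On $[-r,r]$ we have the termwise bound $\left| \frac{x^m}{m!}(\bar a;\bar q)_m \right| \le |A_m|\, r^m$, where $A_m = \frac{1}{m!}(\bar a;\bar q)_m$ as in the proof of the theorem. Because $r$ lies strictly inside the (infinite) radius of convergence, the numerical series $\sum_{m=0}^\infty |A_m| r^m$ converges. Applying the Weierstrass $M$-test with $M_m = |A_m| r^m$ then gives uniform convergence of the series on $[-r,r]$, and hence on any compact subset of $\mathbb{R}$.

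There is essentially no obstacle here: the corollary is a routine consequence of the infinite radius of convergence together with the Weierstrass $M$-test, and the only mild point to be careful about is phrasing it so that the bound $|A_m| r^m$ is used uniformly in $x \in [-r,r]$ while the summability of $\sum |A_m| r^m$ is quoted from the already-proved convergence at the single point $x = r$. I would keep the write-up to a few lines, citing \cite{Apostal} (or \cite{Rudin}) for the $M$-test and for the absolute-convergence-inside-$R$ property.
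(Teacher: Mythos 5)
Your argument is correct and is exactly the standard deduction the paper intends: the corollary is stated without proof as an immediate consequence of the infinite radius of convergence, and your use of absolute convergence inside the radius together with the Weierstrass $M$-test on $[-r,r]$ is the canonical way to fill in that step. Nothing is missing and nothing differs in substance from the paper's (implicit) reasoning.
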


\begin{The}\label{6.1.3}
For $r\in \mathbb{N} \cup\{0\}$, we have 
\begin{equation}
\frac{d^r}{dx^r}\mathcal{R}(\bar{a};\bar{q};x) =\mathcal{R}^{(r)}(\bar{a};\bar{q};x)= \sum_{m=r}^\infty \frac{x^{m-r}}{(m-r)!}(\bar{a};\bar{q})_m.\nonumber\\
\end{equation}
\end{The}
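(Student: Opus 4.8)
The plan is to argue by induction on $r$, with the single analytic ingredient being the legitimacy of term-by-term differentiation of the defining power series. By the Corollary following the radius-of-convergence theorem, the series $\sum_{m=0}^\infty \frac{x^m}{m!}(\bar{a};\bar{q})_m$ converges absolutely for every $x$ and uniformly on every compact subinterval of $\mathbb{R}$; the same holds for the formally differentiated series, since differentiating the general term only replaces $\frac{x^m}{m!}$ by $\frac{x^{m-1}}{(m-1)!}$, which again has infinite radius of convergence by the same ratio-test computation. Hence the standard theorem on differentiation of power series (e.g.\ Apostol) applies: $\mathcal{R}$ is differentiable and its derivative is obtained by differentiating termwise.

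The base case $r=0$ is just the definition~(\ref{6.5.0}). For the inductive step, assume
\[
\mathcal{R}^{(r)}(\bar{a};\bar{q};x)=\sum_{m=r}^\infty \frac{x^{m-r}}{(m-r)!}(\bar{a};\bar{q})_m .
\]
Differentiating term by term, the term with $m=r$ is the constant $(\bar{a};\bar{q})_r$ and contributes $0$, while for $m>r$ one has $\frac{d}{dx}\frac{x^{m-r}}{(m-r)!}=\frac{x^{m-r-1}}{(m-r-1)!}$; collecting these gives
\[
\mathcal{R}^{(r+1)}(\bar{a};\bar{q};x)=\sum_{m=r+1}^\infty \frac{x^{m-(r+1)}}{(m-(r+1))!}(\bar{a};\bar{q})_m ,
\]
which is exactly the claimed formula with $r$ replaced by $r+1$. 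This closes the induction, and by the remark above each differentiation step is justified because the series being differentiated is itself a power series with infinite radius of convergence.

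I do not expect a genuine obstacle here; the only point requiring care is not to simply swap $\frac{d^r}{dx^r}$ and $\sum$ without comment, but to invoke at each stage the fact (already available in the excerpt) that the relevant power series converges uniformly on compacta, so that the hypotheses of the term-by-term differentiation theorem are met. A slightly slicker alternative, which I would mention as a remark, is to bypass the induction entirely: since $\mathcal{R}$ is given by a power series with infinite radius of convergence, the general term-by-term rule gives $\frac{d^r}{dx^r}\sum_m c_m x^m=\sum_{m\ge r} \frac{m!}{(m-r)!}c_m x^{m-r}$ directly, and substituting $c_m=\frac{1}{m!}(\bar{a};\bar{q})_m$ yields $\sum_{m\ge r}\frac{x^{m-r}}{(m-r)!}(\bar{a};\bar{q})_m$ at once.
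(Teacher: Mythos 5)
Your proof is correct. Note that the paper states this theorem without any proof at all, so there is nothing to compare against; your argument---term-by-term differentiation justified by the infinite radius of convergence and uniform convergence on compacta from the preceding corollary, organized as an induction on $r$---is exactly the standard justification the paper implicitly relies on, and either your inductive version or the slicker direct formula $\frac{d^r}{dx^r}\sum_m c_m x^m=\sum_{m\ge r}\frac{m!}{(m-r)!}c_m x^{m-r}$ would serve as a complete proof.
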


\begin{The}(Addition Theorem) 
\begin{equation}
\mathcal{R}(\bar{a};\bar{q};x+y)=\sum_{r=0}^\infty \frac{x^r}{r!}\mathcal{R}^{(r)}(\bar{a};\bar{q};y)\nonumber
\end{equation}
\end{The}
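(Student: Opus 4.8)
The plan is to reduce the identity to a rearrangement of an absolutely convergent double series, combined with the derivative formula of Theorem \ref{6.1.3}. First I would start from the defining series $\mathcal{R}(\bar{a};\bar{q};x+y)=\sum_{m=0}^\infty \frac{(x+y)^m}{m!}(\bar{a};\bar{q})_m$ and apply the binomial theorem to $(x+y)^m$, which gives the double sum $\sum_{m=0}^\infty\sum_{r=0}^m \frac{(\bar{a};\bar{q})_m}{r!\,(m-r)!}\,x^r y^{m-r}$.

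The key step is to interchange the order of summation. To justify this I would use the crude bound $|(\bar{a};\bar{q})_m|=\bigl|\prod_{j=0}^{m-1}(\sum_{i=0}^n a_i q_i^j)\bigr|\le (\sum_{i=0}^n |a_i|)^m$, which yields $\sum_{m}\sum_{r=0}^m \frac{|(\bar{a};\bar{q})_m|}{r!(m-r)!}|x|^r|y|^{m-r}\le e^{(\sum_i |a_i|)(|x|+|y|)}<\infty$; hence the double series converges absolutely (this is also implied by the Corollary on absolute convergence) and may be summed in any order. Re-indexing with $k=m-r$ then transforms the sum into $\sum_{r=0}^\infty \frac{x^r}{r!}\sum_{k=0}^\infty \frac{(\bar{a};\bar{q})_{k+r}}{k!}\,y^{k}$.

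Finally I would identify the inner sum: by Theorem \ref{6.1.3} with $m=k+r$ one has $\mathcal{R}^{(r)}(\bar{a};\bar{q};y)=\sum_{m=r}^\infty \frac{y^{m-r}}{(m-r)!}(\bar{a};\bar{q})_m=\sum_{k=0}^\infty \frac{y^k}{k!}(\bar{a};\bar{q})_{k+r}$, which is exactly that inner sum, and substituting completes the proof. Alternatively, a softer route: since $\mathcal{R}$ is entire (the theorem on the infinite radius of convergence), for fixed $y$ the map $x\mapsto \mathcal{R}(\bar{a};\bar{q};x+y)$ is entire in $x$, so it equals its Maclaurin series $\sum_r \frac{x^r}{r!}\frac{d^r}{dx^r}\mathcal{R}(\bar{a};\bar{q};x+y)|_{x=0}$, and the chain rule together with Theorem \ref{6.1.3} gives $\frac{d^r}{dx^r}\mathcal{R}(\bar{a};\bar{q};x+y)|_{x=0}=\mathcal{R}^{(r)}(\bar{a};\bar{q};y)$. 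I expect the only genuine point requiring care to be the absolute-convergence justification for the rearrangement (equivalently, the legitimacy of term-by-term differentiation), which the exponential bound above settles cleanly.
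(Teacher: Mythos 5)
Your proposal is correct and follows essentially the same route as the paper: binomial expansion of $(x+y)^m$, interchange of the order of summation, and identification of the inner sum with $\mathcal{R}^{(r)}(\bar{a};\bar{q};y)$ via Theorem \ref{6.1.3}. The only difference is that you explicitly justify the interchange with the bound $|(\bar{a};\bar{q})_m|\le\left(\sum_{i=0}^n |a_i|\right)^m$, a step the paper's proof silently omits, so your version is if anything slightly more complete.
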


\begin{proof}
We have
\begin{eqnarray}
\mathcal{R}(\bar{a};\bar{q};x+y)&=&  \sum_{m=0}^\infty \frac{(x+y)^m}{m!}(\bar{a};\bar{q})_m\nonumber\\
&=&  \sum_{m=0}^\infty \sum_{r=0}^m \frac{x^r}{r!} \frac{y^{m-r}}{(m-r)!}(\bar{a};\bar{q})_m\nonumber\\
&=& \sum_{r=0}^\infty \sum_{m=r}^\infty \frac{x^r}{r!} \frac{y^{m-r}}{(m-r)!}(\bar{a};\bar{q})_m\nonumber\\
&=& \sum_{r=0}^\infty \frac{x^r}{r!} \sum_{m=r}^\infty  \frac{y^{m-r}}{(m-r)!}(\bar{a};\bar{q})_m\nonumber\\
\mathcal{R}(\bar{a};\bar{q};x+y) &=& \sum_{r=0}^\infty \frac{x^r}{r!}\mathcal{R}^{(r)}(\bar{a};\bar{q};y).\nonumber
\end{eqnarray}
\end{proof}

\textbf{ Note:}
If $x+iy=z\in\mathbb{C}$, then
\begin{equation}
\mathcal{R}(\bar{a};\bar{q};z)=\sum_{r=0}^\infty \frac{(-ix)^r}{r!}\mathcal{R}^{(r)}(\bar{a};\bar{q};iy)\nonumber
\end{equation}

\begin{The} 
If $z=re^{i\theta}\in\mathbb{C}$, then
\begin{equation}
\mathcal{R}(\bar{a};\bar{q};z)=\mathcal{R}_c(\bar{a};\bar{q};z)+i\mathcal{R}_s(\bar{a};\bar{q};z),\nonumber
\end{equation}
where, 
\begin{eqnarray}
\mathcal{R}_c(\bar{a};\bar{q};z)&=\textrm{Real part of\, }\mathcal{R}(\bar{a};\bar{q};z)&=\sum_{m=0}^\infty \frac{r^m\cos\theta}{m!}(\bar{a};\bar{q})_m\nonumber\\
\mathcal{R}_s(\bar{a};\bar{q};z)&=\textrm{Imaginary  part of\, }\mathcal{R}(\bar{a};\bar{q};z)&=\sum_{m=0}^\infty \frac{r^m\sin\theta}{m!}(\bar{a};\bar{q})_m,\nonumber
\end{eqnarray}
\end{The}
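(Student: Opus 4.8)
The plan is to substitute $z=re^{i\theta}$ into the defining series \eqref{6.3.4} and then separate the real and imaginary parts term by term. First I would use de Moivre's formula, $z^{m}=r^{m}e^{im\theta}=r^{m}\bigl(\cos(m\theta)+i\sin(m\theta)\bigr)$, to rewrite
\[
\mathcal{R}(\bar{a};\bar{q};z)=\sum_{m=0}^{\infty}\frac{r^{m}\cos(m\theta)}{m!}(\bar{a};\bar{q})_{m}+i\sum_{m=0}^{\infty}\frac{r^{m}\sin(m\theta)}{m!}(\bar{a};\bar{q})_{m}.
\]
Since $a_{0},\dots,a_{n},q_{0},\dots,q_{n}\in\mathbb{R}$, every coefficient $(\bar{a};\bar{q})_{m}=\prod_{j=0}^{m-1}\bigl(\sum_{i=0}^{n}a_{i}q_{i}^{\,j}\bigr)$ is real, and $r^{m}\cos(m\theta)$, $r^{m}\sin(m\theta)$ are real, so the two series on the right-hand side have only real terms.

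The one step needing care is the passage from the single complex series to the sum of those two real series, together with the claim that they coincide with the real and imaginary parts of $\mathcal{R}(\bar{a};\bar{q};z)$. This is legitimate because, by the corollary to the theorem asserting infinite radius of convergence, the series for $\mathcal{R}$ is absolutely convergent for every $z$; an absolutely convergent complex series converges unconditionally, hence the partial sums of its real parts tend to $\mathrm{Re}\,\mathcal{R}(\bar{a};\bar{q};z)$ and the partial sums of its imaginary parts tend to $\mathrm{Im}\,\mathcal{R}(\bar{a};\bar{q};z)$. Moreover each of the two new series is itself absolutely convergent, since $\bigl|r^{m}\cos(m\theta)(\bar{a};\bar{q})_{m}/m!\bigr|\le r^{m}\bigl|(\bar{a};\bar{q})_{m}\bigr|/m!$ and the latter sum is finite (the same ratio-test computation as in the radius-of-convergence theorem applies), and likewise for the sine series.

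I would then define $\mathcal{R}_{c}(\bar{a};\bar{q};z)$ and $\mathcal{R}_{s}(\bar{a};\bar{q};z)$ to be precisely these two series, so that the identity $\mathcal{R}=\mathcal{R}_{c}+i\mathcal{R}_{s}$ holds by construction. I do not anticipate a genuine obstacle here: the statement is essentially a bookkeeping consequence of absolute convergence, and the only subtlety is to be sure to invoke absolute — not merely conditional — convergence before separating real and imaginary parts, which the earlier corollary supplies.
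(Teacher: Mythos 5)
Your argument is correct, and since the paper states this theorem without supplying any proof at all, there is nothing to compare it against; the route you take (substitute $z=re^{i\theta}$, apply de Moivre's formula termwise, and invoke absolute convergence from the earlier corollary to justify splitting the series into its real and imaginary parts) is the natural and essentially only argument. One point worth flagging: your computation actually proves the \emph{corrected} version of the statement, with $\cos(m\theta)$ and $\sin(m\theta)$ in the summands, whereas the theorem as printed has $\cos\theta$ and $\sin\theta$ with no dependence on $m$. As printed the identity is false in general, since it would force $\mathcal{R}(\bar{a};\bar{q};re^{i\theta})=e^{i\theta}\,\mathcal{R}(\bar{a};\bar{q};r)$, which fails already for $\mathcal{R}=e^{z}$ (the case $n=0$, $a_0=1$). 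So your version with $z^{m}=r^{m}\bigl(\cos(m\theta)+i\sin(m\theta)\bigr)$ is the one that should stand, and your care in invoking absolute rather than merely conditional convergence before separating real and imaginary parts is exactly the right justification.
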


\begin{The}
For  $ q_0=1$, $0<q_i<1$ and $a_0,a_i \ge 0$, $i=0,1,2,3,\cdots,n$.  The function $\mathcal{R}(\bar{a};\bar{q};x)$ satisfies the following inequality
\begin{equation}
 e^{a_0x}\le \mathcal{R}(\bar{a};\bar{q};x)\le e^{\left(\sum_{i=0}^n a_i\right)x},\quad 0 \le x < \infty.\nonumber
\end{equation}
\end{The}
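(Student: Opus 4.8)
The plan is to prove both inequalities by a term-by-term comparison of the defining power series, exploiting the nonnegativity of the coefficients $a_i$ and the fact that $0<q_i\le 1$ for all $i$. First I would record the elementary two-sided estimate on each factor appearing in $(\bar a;\bar q)_m$. For every integer $j\ge 0$, since $a_i\ge 0$ and $0\le q_i^j\le 1$ (using $q_0=1$ and $0<q_i<1$), we have
\begin{equation}
a_0 \;=\; a_0 q_0^{\,j} \;\le\; \sum_{i=0}^n a_i q_i^{\,j} \;\le\; \sum_{i=0}^n a_i .\nonumber
\end{equation}
All three quantities are nonnegative, so taking the product over $j=0,1,\dots,m-1$ preserves the inequalities and yields
\begin{equation}
a_0^{\,m} \;\le\; (\bar a;\bar q)_m \;=\;\prod_{j=0}^{m-1}\left(\sum_{i=0}^n a_i q_i^{\,j}\right)\;\le\; \left(\sum_{i=0}^n a_i\right)^{\!m},\qquad m=1,2,3,\dots,\nonumber
\end{equation}
while for $m=0$ all three expressions equal $1$ by the convention $(\bar a;\bar q)_0=1$, so the chain of inequalities holds for every $m\ge 0$.

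Next, since $x\ge 0$, the factor $x^m/m!$ is nonnegative, so multiplying the displayed bounds by $x^m/m!$ and summing over $m\ge 0$ gives
\begin{equation}
\sum_{m=0}^\infty \frac{x^m}{m!}\,a_0^{\,m}
\;\le\;
\sum_{m=0}^\infty \frac{x^m}{m!}\,(\bar a;\bar q)_m
\;\le\;
\sum_{m=0}^\infty \frac{x^m}{m!}\left(\sum_{i=0}^n a_i\right)^{\!m}.\nonumber
\end{equation}
Each of the three series converges (the middle one by the infinite radius of convergence established earlier, the outer two being exponential series), so the term-by-term comparison is legitimate, and the outer sums are exactly $e^{a_0 x}$ and $e^{\left(\sum_{i=0}^n a_i\right)x}$. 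This is precisely the asserted inequality on $[0,\infty)$.

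There is no serious obstacle here; the only points requiring a moment's care are (i) that the sign hypothesis $x\ge 0$ is essential, since it is what lets us multiply the factorwise bounds by $x^m/m!$ without reversing inequalities, and (ii) the boundary case $m=0$, which must be treated separately via the convention $(\bar a;\bar q)_0=1$. One may also remark that equality holds on the left when $a_1=\cdots=a_n=0$ and on the right when $q_1=\cdots=q_n=1$ (formally), which shows the bounds are sharp within the stated family.
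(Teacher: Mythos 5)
Your proposal is correct and follows essentially the same route as the paper's own proof: bound each factor $\sum_{i=0}^n a_i q_i^{\,j}$ between $a_0$ and $\sum_{i=0}^n a_i$, take the product over $j$, multiply by $x^m/m!$, and sum the resulting exponential series. Your treatment is in fact slightly more careful than the paper's, in that you explicitly note the $m=0$ convention and the role of $x\ge 0$ in preserving the inequalities.
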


\begin{proof}
 Since $ q_0=1$, $0<q_i<1$ and $a_0,a_i \ge 0$, $i=0,1,2,3,\cdots,n$, we have
\begin{eqnarray}
\prod_{j=0}^{m-1}\left(\sum_{i=0}^n a_iq^j_i\right) &\le& \left(\sum_{i=0}^n a_i\right)^m \nonumber\\
\Rightarrow  \frac{x^m}{m!}\prod_{j=0}^{m-1}\left(\sum_{i=0}^n a_iq^j_i\right) &\le&  \frac{x^m \left(\sum_{i=0}^n a_i\right)^m}{m!}.\nonumber\\
\textrm{Taking summation over m, we get} \nonumber\\
 \mathcal{R}(\bar{a};\bar{q};x)\le e^{\left(\sum_{i=0}^n a_i\right)x},\quad 0 \le x < \infty.\label{6.5.1}
\end{eqnarray}
Similarly, we have
\begin{eqnarray}
a_0^m &\le& \prod_{j=0}^{m-1}\left(\sum_{i=0}^n a_iq^j_i\right)  \nonumber\\
\Rightarrow e^{a_0x} &\le& \mathcal{R}(\bar{a};\bar{q};x) ,\quad 0 \le x < \infty.\label{6.5.2}
\end{eqnarray}
From (\ref{6.5.1}) and (\ref{6.5.2}), we get
\begin{equation}
 e^{a_0x}\le \mathcal{R}(\bar{a};\bar{q};x)\le e^{\left(\sum_{i=0}^n a_i\right)x},\quad 0 \le x < \infty.\nonumber
\end{equation}
\end{proof}

\section{Generalization to fractional order DDE }\label{gen}
Consider the fractional delay differential equation involving multiple proportional delays,
\begin{equation}
D^\alpha_0y(x) =\sum_{i=0}^n a_i y(q_ix),\quad y(0)= 1, \label{7.1.1}
\end{equation}
where $ q_0=1$, $0<q_i<1$ and $a_0,a_i \in \mathbb{R}$, $i=1,2,3,\cdots,n$.

The exact solution of (\ref{7.1.1}) is

\begin{equation*}
\mathcal{R}_\alpha(\bar{a};\bar{q};x) = \sum_{m=0}^\infty \frac{x^{\alpha m}}{\Gamma{(\alpha m + 1)}}(\bar{a};\bar{q})_{\alpha,m}\label{7.1.3}
\end{equation*}
where $(\bar{a};\bar{q})_{\alpha,m}=\prod_{j=0}^{m-1}\left(\sum_{i=0}^n a_iq^{\alpha j}_i\right)$
\begin{The}
The power series  (\ref{7.1.3}) is convergent for all finite values of $x$.
\end{The}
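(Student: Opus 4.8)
The plan is to follow the strategy of the earlier theorem showing that $\mathcal{R}(\bar{a};\bar{q};x)$ has infinite radius of convergence, applying the ratio test; the only change is that the factorial $m!$ is replaced throughout by $\Gamma(\alpha m+1)$. Denote the general term of the series by
\[
c_m = \frac{x^{\alpha m}}{\Gamma(\alpha m+1)}\,(\bar{a};\bar{q})_{\alpha,m},\qquad (\bar{a};\bar{q})_{\alpha,m}=\prod_{j=0}^{m-1}\Bigl(\sum_{i=0}^{n}a_i q_i^{\alpha j}\Bigr).
\]
If some factor $\sum_{i=0}^{n}a_i q_i^{\alpha j}$ vanishes, the series terminates after finitely many terms and there is nothing to prove, so we may assume every factor is nonzero. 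Then
\[
\left|\frac{c_{m+1}}{c_m}\right| = |x|^{\alpha}\;\frac{\Gamma(\alpha m+1)}{\Gamma(\alpha m+\alpha+1)}\;\Bigl|\sum_{i=0}^{n}a_i q_i^{\alpha m}\Bigr|.
\]

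Next I would bound the two $m$-dependent factors separately. Since $q_0=1$ and $0<q_i<1$, we have $0<q_i^{\alpha m}\le 1$ for all $i$ and $m$, hence $\bigl|\sum_{i=0}^{n}a_i q_i^{\alpha m}\bigr|\le \sum_{i=0}^{n}|a_i|=:S$, a constant independent of $m$. For the ratio of Gamma functions, the standard asymptotic $\Gamma(z+\alpha)/\Gamma(z)\sim z^{\alpha}\to\infty$ as $z\to\infty$ (with $\alpha>0$, a consequence of Stirling's formula) gives $\Gamma(\alpha m+1)/\Gamma(\alpha m+\alpha+1)\to 0$ as $m\to\infty$. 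Combining,
\[
\lim_{m\to\infty}\left|\frac{c_{m+1}}{c_m}\right| \le |x|^{\alpha}\,S\,\lim_{m\to\infty}\frac{\Gamma(\alpha m+1)}{\Gamma(\alpha m+\alpha+1)} = 0 < 1
\]
for every finite $x$, so by the ratio test the series converges absolutely, and uniformly on compact sets.

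The only genuinely delicate point is the claim $\Gamma(\alpha m+1)/\Gamma(\alpha m+\alpha+1)\to 0$; I would justify it by invoking the asymptotic relation $\Gamma(z+\alpha)/\Gamma(z)\to\infty$ for $\alpha>0$, which is precisely the input that made the $m!$ version work (there it is simply $(m+1)\to\infty$). Everything else — the product/ratio manipulation and the bound $q_i^{\alpha j}\le 1$ — is routine. If one prefers to read the result as a genuine power series statement, one can substitute $t=x^{\alpha}$ for $x\ge 0$ and observe that the argument shows $\sum_{m}\frac{t^{m}}{\Gamma(\alpha m+1)}(\bar{a};\bar{q})_{\alpha,m}$ has infinite radius of convergence in $t$, which also settles any branch issue for $x<0$ or complex $x$.
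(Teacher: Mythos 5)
Your proof is correct, and it follows the same ratio-test strategy the paper uses for the integer-order case (the paper itself states this fractional version without proof, evidently regarding it as the identical argument with $m!$ replaced by $\Gamma(\alpha m+1)$). Your explicit treatment of the Gamma-ratio asymptotics $\Gamma(\alpha m+1)/\Gamma(\alpha m+\alpha+1)\to 0$ and of the possibly terminating case supplies exactly the details the paper leaves implicit.
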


\begin{The}

For  $ q_0=1$, $0<q_i<1$ and $a_0,a_i \ge0$, $i=0,1,2,3,\cdots,n$.  The function $\mathcal{R}_\alpha(\bar{a};\bar{q};x)$ satisfies the following inequality

	\begin{equation}
	E_\alpha{(a_0x^\alpha)}\le \mathcal{R}_\alpha(\bar{a};\bar{q};x)\le E_\alpha{\left(\left(\sum_{i=0}^n a_i\right)x^\alpha\right)},\quad 0 \le x < \infty.\nonumber
	\end{equation}
\end{The}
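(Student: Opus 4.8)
The plan is to imitate the proof of the non-fractional inequality $e^{a_0x}\le \mathcal{R}(\bar{a};\bar{q};x)\le e^{(\sum_{i=0}^n a_i)x}$ given earlier, simply replacing the exponential by the Mittag-Leffler function $E_\alpha$ and the power series $\sum x^m/m!$ by $\sum x^{\alpha m}/\Gamma(\alpha m+1)$. The key observation is that for $0<q_i<1$ and $\alpha>0$ one has $0< q_i^{\alpha j}\le 1$ for every $j\ge0$, with equality $q_0^{\alpha j}=1$ since $q_0=1$; combined with $a_i\ge0$ this controls each factor $\sum_{i=0}^n a_i q_i^{\alpha j}$ of the generalized symbol $(\bar{a};\bar{q})_{\alpha,m}=\prod_{j=0}^{m-1}\bigl(\sum_{i=0}^n a_i q_i^{\alpha j}\bigr)$ both from above and from below.

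First, for the upper bound, I would observe that since $q_i^{\alpha j}\le 1$ and $a_i\ge0$,
\[
\sum_{i=0}^n a_i q_i^{\alpha j}\le \sum_{i=0}^n a_i\qquad (j=0,1,2,\dots),
\]
so taking the product over $j=0,\dots,m-1$ gives $(\bar{a};\bar{q})_{\alpha,m}\le\bigl(\sum_{i=0}^n a_i\bigr)^m$. Multiplying by the nonnegative quantity $x^{\alpha m}/\Gamma(\alpha m+1)$ (here $x\ge0$) and summing over $m$ yields
\[
\mathcal{R}_\alpha(\bar{a};\bar{q};x)\le\sum_{m=0}^\infty\frac{x^{\alpha m}}{\Gamma(\alpha m+1)}\Bigl(\sum_{i=0}^n a_i\Bigr)^m=E_\alpha\Bigl(\bigl(\sum_{i=0}^n a_i\bigr)x^\alpha\Bigr),
\]
the last equality being the definition of $E_\alpha$ evaluated at $(\sum_{i=0}^n a_i)x^\alpha$. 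The term-by-term summation of the inequalities is legitimate because every summand is nonnegative; absolute convergence of both series on any finite $x$ is guaranteed by the preceding theorem together with the convergence of $E_\alpha$ on all of $\mathbb{R}$.

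For the lower bound I would instead use that $q_i^{\alpha j}\ge 0$ and $a_i\ge0$, so that discarding all terms but $i=0$ gives $\sum_{i=0}^n a_i q_i^{\alpha j}\ge a_0 q_0^{\alpha j}=a_0$; the product over $j=0,\dots,m-1$ then gives $(\bar{a};\bar{q})_{\alpha,m}\ge a_0^m$, and the same summation step produces $\mathcal{R}_\alpha(\bar{a};\bar{q};x)\ge\sum_{m=0}^\infty a_0^m x^{\alpha m}/\Gamma(\alpha m+1)=E_\alpha(a_0x^\alpha)$. Combining the two displays gives the claimed chain of inequalities for $0\le x<\infty$.

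I do not expect a genuine obstacle: the argument is a routine two-sided comparison of nonnegative series. The only points deserving a word of care are that the coefficient bound $q_i^{\alpha j}\le1$ holds for arbitrary real $\alpha>0$ (not merely integers), which follows from $0<q_i<1$ and the monotonicity of $t\mapsto t^{\alpha j}$ on $(0,1)$; and that the hypotheses $x\ge0$ and $a_i\ge0$ are exactly what is needed to make every term of every series nonnegative, so that the term-wise inequalities survive summation.
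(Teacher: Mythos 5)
Your proof is correct and follows exactly the argument the paper uses for the integer-order inequality $e^{a_0x}\le \mathcal{R}(\bar{a};\bar{q};x)\le e^{(\sum_{i=0}^n a_i)x}$ (the paper itself states the fractional theorem without proof, evidently intending precisely this adaptation). The termwise bounds $a_0^m\le(\bar{a};\bar{q})_{\alpha,m}\le\bigl(\sum_{i=0}^n a_i\bigr)^m$ and the nonnegativity justification for summing them are exactly what is needed, so nothing is missing.
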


\section{Conclusions}\label{concl}
In this paper, we have obtained a new special function arising from  differential equation involving multiple proportional delays. The solution is obtained by applying the successive approximation method. The existence, uniqueness, stability and convergence results for the time dependent delay differential equations are presented in this paper. The new special function exhibit different properties and relations. The generalization to fractional order case is also presented.

\textbf{Acknowledgements:}\\
S. Bhalekar acknowledges the Science and Engineering Research Board (SERB), New Delhi, India for the Research Grant (Ref. MTR/2017/000068) 
under Mathematical Research Impact Centric Support (MATRICS) Scheme.

\end{document}